\newtheorem{theorem}{Theorem}[section]
\newtheorem{proposition}[theorem]{Proposition}
\theoremstyle{definition}
\newtheorem{definition}[theorem]{Definition}
\theoremstyle{remark}
\newtheorem{remark}[theorem]{Remark}
\numberwithin{equation}{section}
\begin{document}
\setcounter{page}{1}

\title[Convex hull-like property]{Convex hull-like property and \\ supported images of open sets}

\author[B. Ricceri]{B. Ricceri}

\address{$^1$ Department of Mathematics, University of Catania, Viale A. Doria 6, 95125 Catania, Italy}
\email{\textcolor[rgb]{0.00,0.00,0.84}{ricceri@dmi.unict.it}}

\dedicatory{Dedicated to Professor Anthony To-Ming Lau, with esteem and friendship}

\subjclass[2010]{Primary 35B50; Secondary 26B10, 26A51, 35F05, 35F50, 35J96.}

\keywords{Convex hull property, supported set, quasi-convex function, singular point, Monge-Amp\`ere equation.}

\begin{abstract}
In this note, as a particular case of a more general result, we obtain the following theorem:

 Let $\Omega\subseteq {\bf R}^n$ be a
non-empty bounded open set and let $f:\overline {\Omega}\to {\bf R}^n$ be a  continuous function which is $C^1$ in $\Omega$.
Then, at least one of the following assertions holds:
\begin{itemize}
\item[$(a)$] $f(\Omega)\subseteq \hbox {\rm conv}(f(\partial \Omega))\ .$
\item[$(b)$] There exists a non-empty open set $X\subseteq \Omega$, with $\overline {X}\subseteq \Omega$, satisfying the following property:
for every continuous  function $g:\Omega\to {\bf R}^n$ which is $C^1$ in $X$, there exists $\tilde\lambda\geq 0$ such that, for each
$\lambda>\tilde\lambda$, the Jacobian determinant of the function $g+\lambda f$ vanishes at some point of $X$.
\end{itemize}

\smallskip

As a consequence, if $n=2$ and $h:\Omega\to {\bf R}$ is a non-negative function, for each $u\in C^2(\Omega)\cap C^1(\overline {\Omega})$
satisfying in $\Omega$ the Monge-Amp\`ere equation
$$u_{xx}u_{yy}-u_{xy}^2=h\ ,$$
one has
$$\nabla u(\Omega)\subseteq \hbox{\rm conv}(\nabla u(\partial\Omega))\ .$$
\end{abstract} \maketitle

\section{Introduction and preliminaries}
Here and in what follows, $\Omega$ is a non-empty relatively compact and open set in a topological space $E$, with $\partial \Omega\neq \emptyset$, and
$Y$ is a real locally convex Hausdorff topological vector space. $\overline {\Omega}$ and $\partial \Omega$ denote the closure and the
boundary of $\Omega$, respectively. Since $\overline {\Omega}$ is compact, $\partial\Omega$, being closed, is compact too.

\smallskip
Let us first recall some well-known definitions.

\smallskip
Let $S$ be a subset of $Y$ and let $y_0\in S$. As usual, we say that $S$ is supported at $y_0$
 if there exists $\varphi\in Y^*\setminus \{0\}$
such that $\varphi(y_0)\leq \varphi(y)$ for all $y\in S$. If this happens, of course
$y_0\in\partial S$.

\smallskip
Further, extending a maximum principle definition for real-valued functions,
a continuous function $f:\overline {\Omega}\to Y$ is said to  satisfy the convex hull property in $\overline {\Omega}$ (see \cite{DKS, K} and references therein)  if
$$f(\Omega)\subseteq \overline {\hbox {\rm conv}}(f(\partial \Omega))\ ,$$
$\overline {\hbox {\rm conv}}(f(\partial \Omega))$ being the closed convex hull of $f(\partial \Omega)$.

\smallskip
When dim$(Y)<\infty$, since $f(\partial\Omega)$ is compact, conv$(f(\partial\Omega))$ is compact too and so $\overline
{\hbox {\rm conv}}(f(\partial \Omega))=\hbox {\rm conv}(f(\partial\Omega))$.

\smallskip
 A function $\psi:Y\to {\bf R}$ is said to be quasi-convex if, for each $r\in {\bf R}$, the set $\psi^{-1}(]-\infty,r])$ is convex.
 
\smallskip
Notice the following proposition:
\begin{proposition}\label{PROPOSITION 1}
For each pair $A, B$ of non-empty subsets of $Y$, the following assertions are equivalent:
\begin{itemize}
\item[$(a_1)$] $A\subseteq \overline{\hbox {\rm conv}}(B)\ .$
\item[$(a_2)$] For every continuous and quasi-convex function $\psi:Y\to {\bf R}$, one has
$$\sup_A\psi\leq \sup_B\psi\ .$$
\end{itemize}
\end{proposition}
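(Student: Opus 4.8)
The plan is to prove the two implications separately, exploiting two elementary facts: a continuous quasi-convex function has closed and convex sublevel sets, and a continuous linear functional is, in particular, a continuous quasi-convex function (its sublevel sets being closed half-spaces).

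For $(a_1)\Rightarrow (a_2)$, fix a continuous and quasi-convex $\psi:Y\to {\bf R}$ and put $r:=\sup_B\psi$. Since $B\neq\emptyset$ and $\psi$ is real-valued, $r>-\infty$; if $r=+\infty$ the desired inequality is trivial, so we may assume $r\in {\bf R}$. Consider the sublevel set $C:=\psi^{-1}(]-\infty,r])$. By quasi-convexity $C$ is convex, and by continuity of $\psi$ it is closed; moreover $B\subseteq C$ by the very definition of $r$. Hence $\overline{\hbox{\rm conv}}(B)\subseteq C$, and therefore, using $(a_1)$, $A\subseteq \overline{\hbox{\rm conv}}(B)\subseteq C$. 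This means $\psi(a)\leq r$ for every $a\in A$, and taking the supremum over $A$ gives $\sup_A\psi\leq r=\sup_B\psi$, which is $(a_2)$.

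For $(a_2)\Rightarrow (a_1)$, I would argue by contradiction. Suppose $A\not\subseteq \overline{\hbox{\rm conv}}(B)$ and pick $a_0\in A\setminus \overline{\hbox{\rm conv}}(B)$. The singleton $\{a_0\}$ is compact and convex, the set $\overline{\hbox{\rm conv}}(B)$ is closed and convex, and they are disjoint; since $Y$ is a locally convex Hausdorff topological vector space, the Hahn--Banach separation theorem provides $\varphi\in Y^*$ and $\alpha\in {\bf R}$ such that $\varphi(y)\leq \alpha<\varphi(a_0)$ for all $y\in \overline{\hbox{\rm conv}}(B)$, in particular for all $y\in B$. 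Now take $\psi:=\varphi$, which is continuous and, being linear, quasi-convex. Then $\sup_B\psi\leq \alpha<\varphi(a_0)\leq \sup_A\psi$, contradicting $(a_2)$. Hence $A\subseteq \overline{\hbox{\rm conv}}(B)$.

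I do not expect a genuine obstacle here: the statement is essentially a reformulation of the geometric Hahn--Banach theorem. The only points deserving care are the verification that the separation can be taken in strict form (a point versus a closed convex set in a locally convex Hausdorff space), and the observation — used in the second implication — that continuous linear functionals are admissible as test functions in $(a_2)$, so that the finite-dimensional or metrizability assumptions are nowhere needed.
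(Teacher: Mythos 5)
Your proof is correct. The backward implication $(a_2)\Rightarrow(a_1)$ is essentially identical to the paper's: both separate a point $a_0\in A\setminus\overline{\hbox{\rm conv}}(B)$ from the closed convex set $\overline{\hbox{\rm conv}}(B)$ by a continuous linear functional and observe that linear functionals are admissible test functions in $(a_2)$. The forward implication, however, is organized differently. The paper approximates a point $\tilde y\in\overline{\hbox{\rm conv}}(B)$ by a net of convex combinations $\sum_i\lambda_i z_i$ with $z_i\in B$, applies the inequality $\psi\left(\sum_i\lambda_i z_i\right)\leq\max_i\psi(z_i)$ (a consequence of quasi-convexity), and passes to the limit using continuity. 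You instead work with the single sublevel set $C=\psi^{-1}(]-\infty,\sup_B\psi])$, note that it is closed and convex and contains $B$, hence contains $\overline{\hbox{\rm conv}}(B)$, and conclude at once. Your route is slightly more economical: it invokes the defining property of quasi-convexity directly, bypasses nets and the finite-combination inequality, and makes the roles of continuity (closedness of $C$) and quasi-convexity (convexity of $C$) transparent; the one extra point of care, which you handle, is the case $\sup_B\psi=+\infty$, where the sublevel set argument does not apply but the conclusion is trivial. The paper's version, by contrast, exhibits the quantitative mechanism $\psi(y_\alpha)\leq\sup_B\psi$ pointwise along the approximating net, which is marginally more self-contained for a reader who has not internalized that closed convex supersets of $B$ contain $\overline{\hbox{\rm conv}}(B)$.
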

\begin{proof}
Let $(a_1)$ hold. Fix any continuous and quasi-convex function $\psi:Y\to {\bf R}$. Fix $\tilde y\in A$.
Then, there is a net $\{y_{\alpha}\}$ in conv$(B)$ converging to $\tilde y$. So, for each $\alpha$, we have
$y_{\alpha}=\sum_{i=1}^k\lambda_i z_i$, where $z_i\in B$, $\lambda_i\in [0,1]$ and $\sum_{i=1}^k\lambda_i=1$.
By quasi-convexity, we have
$$\psi(y_{\alpha})=\psi\left ( \sum_{i=1}^k\lambda_iz_i\right )\leq \max_{1\leq i\leq k}\psi(z_i)\leq \sup_B\psi$$
and so, by continuity,
$$\psi(\tilde y)=\lim_{\alpha}\psi(y_{\alpha})\leq\sup_B\psi$$
which yields $(a_2)$.

Now, let $(a_2)$ hold. Let $x_0\in A$. If $x_0\not\in \overline{\hbox {\rm conv}}(B)$, by the standard separation theorem, there
would be $\psi\in Y^*\setminus\{0\}$ such that $\sup_{\overline{\hbox {\rm conv}}(B)}\psi<\psi(x_0)$, against $(a_2)$. So,
$(a_1)$ holds.
\end{proof}
\medskip
Clearly, applying Proposition \ref{PROPOSITION 1}, we obtain the following one:

\medskip
\begin{proposition}\label{PROPOSITION 2}
 For any continuous function $f:\overline {\Omega}\to Y$, the following assertions are equivalent:
\begin{itemize}
\item[$(b_1)$] $f$ satisfies the convex hull property in $\overline {\Omega}$.
\item[$(b_2)$] For every continuous and quasi-convex function $\psi:Y\to {\bf R}$, one has
$$\sup_{x\in \Omega}\psi(f(x))=\sup_{x\in\partial \Omega}\psi(f(x)).$$
\end{itemize}
\end{proposition}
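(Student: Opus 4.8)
The plan is to invoke Proposition \ref{PROPOSITION 1} directly, with the choice $A=f(\Omega)$ and $B=f(\partial\Omega)$ (both non-empty subsets of $Y$, since $\Omega\neq\emptyset$ and $\partial\Omega\neq\emptyset$). With this choice, assertion $(a_1)$ of Proposition \ref{PROPOSITION 1} reads $f(\Omega)\subseteq\overline{\hbox{\rm conv}}(f(\partial\Omega))$, which is exactly the convex hull property, i.e. $(b_1)$.

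Next I would rewrite assertion $(a_2)$ of Proposition \ref{PROPOSITION 1} in terms of $f$. For any $\psi:Y\to{\bf R}$ one has $\sup_A\psi=\sup_{x\in\Omega}\psi(f(x))$ and $\sup_B\psi=\sup_{x\in\partial\Omega}\psi(f(x))$, so $(a_2)$ becomes: for every continuous and quasi-convex $\psi:Y\to{\bf R}$,
$$\sup_{x\in\Omega}\psi(f(x))\leq\sup_{x\in\partial\Omega}\psi(f(x)).$$
Thus it remains to upgrade this inequality to the equality in $(b_2)$, and for that it suffices to check that the reverse inequality holds automatically, for every continuous $\psi$ (quasi-convex or not).

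The key observation — and really the only point requiring a word of justification — is that, since $\psi\circ f:\overline{\Omega}\to{\bf R}$ is continuous and $\Omega$ is dense in $\overline{\Omega}$ (it is an open set whose closure is $\overline{\Omega}$), every value of $\psi\circ f$ on $\overline{\Omega}$ is a limit of values taken on $\Omega$; hence $\sup_{\overline{\Omega}}(\psi\circ f)=\sup_{\Omega}(\psi\circ f)$. As $\partial\Omega\subseteq\overline{\Omega}$, this gives $\sup_{x\in\partial\Omega}\psi(f(x))\leq\sup_{x\in\Omega}\psi(f(x))$, so the two suprema in $(b_2)$ always satisfy $\geq$, and $(a_2)$ is precisely what supplies the matching $\leq$. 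Combining, $(a_2)$ is equivalent to $(b_2)$, and the equivalence $(b_1)\Leftrightarrow(b_2)$ follows at once from Proposition \ref{PROPOSITION 1}. I do not anticipate any real obstacle here: the statement is a straightforward specialization of Proposition \ref{PROPOSITION 1}, the only substantive ingredient being the density of $\Omega$ in $\overline{\Omega}$ used to turn the inequality of $(a_2)$ into an equality.
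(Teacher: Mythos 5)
Your proposal is correct and follows exactly the route the paper intends: the paper states Proposition \ref{PROPOSITION 2} as an immediate consequence of Proposition \ref{PROPOSITION 1} with $A=f(\Omega)$, $B=f(\partial\Omega)$, and your density argument correctly supplies the one detail left implicit, namely that continuity of $\psi\circ f$ and density of $\Omega$ in $\overline{\Omega}$ turn the inequality of $(a_2)$ into the equality of $(b_2)$.
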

\medskip
In view of Proposition \ref{PROPOSITION 2}, we now introduce the notion of convex hull-like property for functions defined in $\Omega$ only.

\medskip
\begin{definition}\label{DEFINITION 1} A continuous function $f:\Omega\to Y$ is said to satisfy the convex hull-like property in $\Omega$ if, for every continuous and quasi-convex function
$\psi:Y\to {\bf R}$, there exists $x^*\in\partial \Omega$ such that
$$\limsup_{x\to x^*}\psi(f(x))=\sup_{x\in \Omega}\psi(f(x))\ .$$
\end{definition}
\medskip
We have

\medskip
\begin{proposition}\label{PROPOSITION 3}
Let $g:\overline {\Omega}\to Y$ be a continuous function and let $f=g_{|\Omega}$.

Then, the following assertions are equivalent:
\begin{itemize}
\item[$(c_1)$] $f$ satisfies the convex hull-like property in $\Omega$.
\item[$(c_2)$] $g$ satisfies the convex hull property in $\overline {\Omega}$.
\end{itemize}
\end{proposition}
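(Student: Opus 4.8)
The plan is to reduce both implications, via Proposition \ref{PROPOSITION 2}, to the single statement ``$\sup_{x\in\Omega}\psi(f(x))=\sup_{x\in\partial\Omega}\psi(g(x))$ for every continuous quasi-convex $\psi:Y\to{\bf R}$'', and then to match this with Definition \ref{DEFINITION 1} using one topological fact: since $\Omega$ is open, $\partial\Omega=\overline{\Omega}\setminus\Omega$, so every $x^*\in\partial\Omega$ lies in $\overline{\Omega}$ but not in $\Omega$, hence is a cluster point of $\Omega$; moreover $\Omega$ is dense in $\overline{\Omega}$.

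First I would record an elementary observation. Fix a continuous $\psi:Y\to{\bf R}$ and $x^*\in\partial\Omega$. Because $\psi\circ g$ is continuous on $\overline{\Omega}$, given $\varepsilon>0$ there is a neighbourhood $V$ of $x^*$ with $|\psi(g(x))-\psi(g(x^*))|<\varepsilon$ for all $x\in V\cap\overline{\Omega}$; since $x^*\in\overline{\Omega}$, $V\cap\Omega\neq\emptyset$, and since $x^*\notin\Omega$ we have $\psi(g(x))=\psi(f(x))$ on $V\cap\Omega$. Letting $\varepsilon\to0$ along shrinking neighbourhoods gives
$$\limsup_{x\to x^*}\psi(f(x))=\psi(g(x^*)).$$
Likewise, density of $\Omega$ in $\overline{\Omega}$ together with continuity of $\psi\circ g$ yields $\sup_{x\in\Omega}\psi(f(x))=\sup_{x\in\overline{\Omega}}\psi(g(x))=\sup_{x\in\Omega}\psi(g(x))$, and in particular $\sup_{x\in\partial\Omega}\psi(g(x))\le\sup_{x\in\Omega}\psi(f(x))$.

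For $(c_2)\Rightarrow(c_1)$: assume $g$ satisfies the convex hull property and fix a continuous quasi-convex $\psi:Y\to{\bf R}$. By Proposition \ref{PROPOSITION 2}, $\sup_{x\in\Omega}\psi(f(x))=\sup_{x\in\partial\Omega}\psi(g(x))$; since $\partial\Omega$ is compact and $\psi\circ g$ continuous, this common value equals $\psi(g(x^*))$ for some $x^*\in\partial\Omega$. By the observation, $\limsup_{x\to x^*}\psi(f(x))=\psi(g(x^*))=\sup_{x\in\Omega}\psi(f(x))$, which is exactly the requirement of Definition \ref{DEFINITION 1}. For $(c_1)\Rightarrow(c_2)$: fix a continuous quasi-convex $\psi$, take the $x^*\in\partial\Omega$ provided by $(c_1)$, and use the observation to rewrite $\sup_{x\in\Omega}\psi(f(x))=\limsup_{x\to x^*}\psi(f(x))=\psi(g(x^*))\le\sup_{x\in\partial\Omega}\psi(g(x))$; combined with the reverse inequality noted above, this gives $\sup_{x\in\Omega}\psi(g(x))=\sup_{x\in\partial\Omega}\psi(g(x))$, so Proposition \ref{PROPOSITION 2} applies.

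I do not expect a genuine obstacle here; the only point requiring care, rather than ingenuity, is the handling of $\limsup_{x\to x^*}$ in the general topological space $E$ — one must verify that each $x^*\in\partial\Omega$ is a cluster point of $\Omega$, so that the $\limsup$ is taken along a filter that actually accumulates at $x^*$, and that the neighbourhood estimate above is legitimate in this generality. Both follow from $\Omega$ being open with $\overline{\Omega}$ compact and from continuity of $g$ on $\overline{\Omega}$, so the argument goes through verbatim.
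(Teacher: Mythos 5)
Your argument is correct and follows essentially the same route as the paper's own proof: both directions are reduced via Proposition \ref{PROPOSITION 2} to the identity $\sup_{\Omega}\psi\circ f=\sup_{\partial\Omega}\psi\circ g$, using the key facts that $\limsup_{x\to x^*}\psi(f(x))=\psi(g(x^*))$ for $x^*\in\partial\Omega$, that $\partial\Omega$ is compact so the boundary supremum is attained, and that $\Omega$ is dense in $\overline{\Omega}$. The only difference is that you spell out the neighbourhood argument behind the $\limsup$ identity, which the paper takes as immediate.
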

\medskip
\begin{proof}Let $(c_1)$ hold. Let $\psi:Y\to {\bf R}$ be any continuous and quasi-convex function.
Then, by Definition \ref{DEFINITION 1}, there exists $x^*\in\partial \Omega$ such that
$$\limsup_{x\to x^*}\psi(f(x))=\sup_{x\in \Omega}\psi(f(x))\ .$$
But
$$\limsup_{x\to x^*}\psi(f(x))=\psi(g(x^*))$$
and hence
$$\sup_{x\in\partial \Omega}\psi(g(x))=\sup_{x\in \Omega}\psi(g(x))\ .$$
So, by Proposition \ref{PROPOSITION 2}, $(c_2)$ holds.

Now, let $(c_2)$ hold. Let $\psi:Y\to {\bf R}$ be any continuous and quasi-convex function. Then, by
Proposition \ref{PROPOSITION 2}, one has
$$\sup_{x\in\partial \Omega}\psi(g(x))=\sup_{x\in \Omega}\psi(g(x))\ .$$
Since $\partial \Omega$ is compact and $\psi\circ g$ is continuous,
there exists $x^*\in\partial \Omega$ such that
$$\psi(g(x^*))=\sup_{x\in\partial \Omega}\psi(g(x))\ .$$
But
$$\psi(g(x^*))=\lim_{x\to x^*}\psi(f(x))$$
and, by continuity again,
$$\sup_{x\in \Omega}\psi(g(x))=\sup_{x\in \overline {\Omega}}\psi(g(x))$$
and so
$$\lim_{x\to x^*}\psi(f(x))=\sup_{x\in \Omega}\psi(f(x))$$
which yields $(c_1)$.\end{proof}
\medskip
After the above preliminaries, we can declare the aim of this short note:  to establish Theorem \ref{THEOREM 1} below jointly with some of its consequences.

\medskip
\begin{theorem}\label{THEOREM 1}For any continuous function
 $f:\Omega\to Y$, at least one of the following assertions holds:
 \begin{itemize}
\item[$(i)$] $f$ satisfies the convex hull-like property in $\Omega$\ .
\item[$(ii)$]There exists a non-empty open set $X\subseteq \Omega$, with $\overline {X}\subseteq \Omega$,  satisfying the following property:
for every continuous function $g:\Omega\to Y$, there exists $\tilde\lambda\geq 0$
such that, for each $\lambda>\tilde\lambda$, the set $(g+\lambda f)(X)$
is supported at one of its points.
\end{itemize}
\end{theorem}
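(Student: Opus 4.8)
The plan is to establish the dichotomy by contraposition: I assume that $(i)$ fails and then exhibit the open set $X$ together with a single linear functional that makes $(ii)$ work. If $(i)$ fails there is a continuous quasi-convex $\psi:Y\to{\bf R}$ such that no $x^*\in\partial\Omega$ achieves the supremum; since $\limsup_{x\to x^*}\psi(f(x))\le M$ always holds, where $M:=\sup_{x\in\Omega}\psi(f(x))$, this means that $\limsup_{x\to x^*}\psi(f(x))<M$ for every $x^*\in\partial\Omega$. The first step is to upgrade this to a uniform bound near $\partial\Omega$: by the definition of $\limsup$, each $x^*$ has an open neighbourhood $U_{x^*}$ in $E$ and a real $c_{x^*}<M$ with $\psi(f(x))<c_{x^*}$ on $U_{x^*}\cap\Omega$, and compactness of $\partial\Omega$ produces finitely many of these, hence a single $c<M$ and an open set $V\supseteq\partial\Omega$ with $\psi(f(x))<c$ for all $x\in V\cap\Omega$. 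I then set
$$X:=\{x\in\Omega:\psi(f(x))>c\},\qquad C:=\psi^{-1}(]-\infty,c])\,.$$
Continuity of $\psi\circ f$ makes $X$ open; it is non-empty because $\sup_\Omega\psi\circ f=M>c$; it is disjoint from $V$, so $\overline X\subseteq\overline\Omega\setminus V\subseteq\overline\Omega\setminus\partial\Omega=\Omega$ and $\overline X$ is compact. Moreover $C$ is closed and convex (continuity and quasi-convexity of $\psi$) and non-empty (it contains $f(x)$ for $x\in V\cap\Omega$, which is non-empty since boundary points are limits of points of $\Omega$), while $f(X)\cap C=\emptyset$ and $f(\partial X)\subseteq C$ --- the latter because a point of $\partial X=\overline X\setminus X$ lies in $\Omega$ but not in $X$, i.e. where $\psi\circ f\le c$.

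The second step is to linearise. Fixing any $\bar x\in X$, one has $f(\bar x)\notin C$, so the Hahn--Banach strong separation theorem yields $\varphi\in Y^*\setminus\{0\}$ and $\alpha\in{\bf R}$ with
$$\varphi(f(\bar x))<\alpha\le\varphi(y)\qquad\text{for all }y\in C\,.$$
I claim this $X$ and this $\varphi$ verify $(ii)$. Given a continuous $g:\Omega\to Y$, consider $\Phi_\lambda:=\varphi\circ(g+\lambda f)=\varphi\circ g+\lambda(\varphi\circ f)$, continuous on the compact set $\overline X$. On $\partial X$ we have $\varphi\circ f\ge\alpha$, so $\Phi_\lambda\ge\min_{\partial X}(\varphi\circ g)+\lambda\alpha$ there (for $\lambda\ge0$), whereas $\Phi_\lambda(\bar x)=\varphi(g(\bar x))+\lambda\varphi(f(\bar x))$ with $\alpha-\varphi(f(\bar x))>0$. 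Hence, putting
$$\tilde\lambda:=\max\Bigl\{\,0,\ \frac{\varphi(g(\bar x))-\min_{\partial X}(\varphi\circ g)}{\alpha-\varphi(f(\bar x))}\,\Bigr\}\,,$$
for every $\lambda>\tilde\lambda$ we get $\min_{\overline X}\Phi_\lambda\le\Phi_\lambda(\bar x)<\min_{\partial X}\Phi_\lambda$, so the minimum of $\Phi_\lambda$ over $\overline X$ is attained at some $x_0\in\overline X\setminus\partial X=X$; then $\varphi((g+\lambda f)(x_0))\le\varphi((g+\lambda f)(x))$ for all $x\in X$, i.e. $(g+\lambda f)(X)$ is supported at $(g+\lambda f)(x_0)$. (If $\partial X=\emptyset$, which may happen when $E$ is disconnected, then $\overline X=X$ and the conclusion is immediate for every $\lambda\ge0$, the displayed estimate being read with $\min\emptyset=+\infty$.)

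The conceptual crux --- and the step I expect to be the real obstacle --- is the passage from the failure of a property phrased through \emph{all} continuous quasi-convex functions to the existence of a \emph{single} linear functional doing the job uniformly in the perturbation $g$. This is made possible by locating the obstruction on the superlevel set $X=\{\psi\circ f>c\}$, whose image misses the convex set $C=\{\psi\le c\}$ while the image of $\partial X$ sits inside $C$; separating $C$ from one value $f(\bar x)$ converts the quasi-convex obstruction into a linear one, and --- crucially --- the strictness of the separation provides a \emph{growth gap in $\lambda$}: along $\partial X$ the function $\varphi\circ(g+\lambda f)$ increases at rate $\ge\alpha$, while at $\bar x$ it increases only at the strictly smaller rate $\varphi(f(\bar x))$, so for large $\lambda$ the minimum over $\overline X$ cannot sit on $\partial X$ and must fall inside the open set $X$. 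The remaining points --- the uniform choice of $c$ via compactness of $\partial\Omega$, the inclusion $\overline X\subseteq\Omega$, and the non-emptiness of $C$ --- are routine.
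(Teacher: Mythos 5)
Your proof is correct and follows essentially the same route as the paper's: negate $(i)$, use compactness of $\partial\Omega$ to isolate a compact superlevel set of $\psi\circ f$ away from the boundary, strictly separate $f(\bar x)$ from the closed convex sublevel set $\psi^{-1}(]-\infty,c])$ by a linear functional $\varphi$, and exploit the gap between the growth rates in $\lambda$ of $\varphi\circ(g+\lambda f)$ at $\bar x$ and on the rest of the compact set to force the minimizer into the open set for $\lambda$ large. The only (immaterial) difference is the choice of $X$: the paper takes $X=\{x\in\Omega:\varphi(f(x))<\rho\}$, a sublevel set of the \emph{linear} functional sitting inside $K=\{\psi\circ f\ge r\}$, and minimizes over $K$, whereas you take the superlevel set $X=\{\psi\circ f>c\}$ itself and minimize over $\overline X$, ruling out $\partial X$.
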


\section{ Proof of Theorem \ref{THEOREM 1}}
\bigskip
Assume that $(i)$ does not hold. So, we are assuming that there exists
a continuous and quasi-convex function $\psi:Y\to {\bf R}$ such that
\begin{equation}\label{1}
\limsup_{x\to z}\psi(f(x))<\sup_{x\in \Omega}\psi(f(x))
\end{equation}
for all $z\in \partial \Omega$.

In view of $(\ref{1})$, for each $z\in \partial \Omega$, there exists
an open neighbourhood $U_z$ of $z$ such that
$$\sup_{x\in U_z\cap \Omega}\psi(f(x))<\sup_{x\in \Omega}\psi(f(x))\ .$$
Since $\partial\Omega$ is compact, there are finitely many $z_1,...,z_k\in
\partial \Omega$ such that
\begin{equation}\label{2}\partial \Omega\subseteq \bigcup_{i=1}^kU_{z_i}\ .
\end{equation}
Put
$$U=\bigcup_{i=1}^kU_{z_i}\ .$$
Hence
$$\sup_{x\in U\cap \Omega}\psi(f(x))=\max_{1\leq i\leq k}\sup_{x\in U_{z_i}\cap \Omega}\psi(f(x))<
\sup_{x\in \Omega}\psi(f(x))\ .$$
Now, fix a number $r$ so that
\begin{equation}\label{3}\sup_{x\in U\cap \Omega}\psi(f(x))<r<\sup_{x\in \Omega}\psi(f(x)) 
\end{equation}
and set
$$K=\{x\in \Omega : \psi(f(x))\geq r\}\ .$$
Since $f, \psi$ are continuous, $K$ is closed in $\Omega$. But,
since $K\cap U=\emptyset$ and $U$ is open, in view of $(\ref{2})$,
$K$ is closed in $E$. Hence, $K$ is compact since  $\overline {\Omega}$ is so.
By $(\ref{3})$, we can fix $\bar x\in \Omega$ such that $\psi(f(\bar x))>r$. Notice that the set
$\psi^{-1}(]-\infty,r])$ is closed and convex. So, thanks to
the standard separation theorem,  there exists
a non-zero continuous linear functional $\varphi:Y\to {\bf R}$ such that
\begin{equation}\label{4}\varphi(f(\bar x))<\inf_{y\in \psi^{-1}(]-\infty,r])}\varphi(y)\ .
\end{equation}
Then, from $(\ref{4})$, it follows
$$\varphi(f(\bar x))<\inf_{x\in \Omega\setminus K}\varphi(f(x))\ .$$
Now, choose $\rho$ so that
$$\varphi(f(\bar x))<\rho<\inf_{x\in \Omega\setminus K}\varphi(f(x))$$
and set
$$X=\{x\in \Omega : \varphi(f(x))<\rho\}\ .$$
Clearly, $X$ is a non-empty open set contained in $K$. Now, let $g:\Omega\to Y$ be any continuous function.
Set
$$\tilde\lambda=\inf_{x\in X}{{\varphi(g(x))-\inf_{z\in K}\varphi(g(z))}\over {\rho-\varphi(f(x))}}\ .$$
Fix $\lambda>\tilde\lambda$. So, there is $x_0\in X$ such that
$${{\varphi(g(x_0))-\inf_{z\in K}\varphi(g(z))}\over {\rho-\varphi(f(x_0))}}<\lambda\ .$$
  From this, we get
  \begin{equation}\label{5}\varphi(g(x_0))+\lambda\varphi(f(x_0))<\lambda \rho+\inf_{z\in K}\varphi(g(z))\ .
  \end{equation}
By continuity and compactness, there
exists $\hat x\in K$ such that
\begin{equation}\label{6}\varphi(g(\hat x)+\lambda f(\hat x))\leq\varphi(g(x))+\lambda f(x))
\end{equation}
for all $x\in K$.
Let us prove that $\hat x\in X$.
Arguing by contradiction, assume that $\varphi(f(\hat x))\geq \rho$.
Then, taking $(\ref{5})$ into account, we would have
$$\varphi(g(x_0))+\lambda\varphi(f(x_0))<\lambda\varphi(f(\hat x))+\varphi(g(\hat x))$$
contradicting $(6)$. So, it is true that $\hat x\in X$, and, by $(\ref{6})$, the set
$(g+\lambda f)(X)$ is supported at its point $g(\hat x)+\lambda f(\hat x)$.

\bigskip
\section{ Applications}

\bigskip
The first application of Theorem \ref{THEOREM 1} shows a strongly bifurcating behaviour of certain equations in ${\bf R}^n$.

\medskip
\begin{theorem}\label{THEOREM 2}
Let $\Omega$ be a non-empty bounded open subset of ${\bf R}^n$ and let $f:\Omega\to {\bf R}^n$ a
continuous function.

Then,  at least one of the following assertions holds:
\begin{itemize}
\item[$(d_1)$] $f$ satisfies the convex hull-like property in $\Omega$.
\item[$(d_2)$] There exists a non-empty open set $X\subseteq \Omega$, with $\overline {X}\subseteq \Omega$, satisfying the following property:
for every continuous function $g:\Omega\to {\bf R}^n$, there exists $\tilde\lambda\geq 0$
such that, for each $\lambda>\tilde\lambda$, there exist $\hat x\in X$ and two
sequences $\{y_k\}$, $\{z_k\}$ in ${\bf R}^n$, with
$$\lim_{k\to \infty}y_k=\lim_{k\to \infty}z_k=g(\hat x)+\lambda f(\hat x)\ ,$$
such that, for each $k\in {\bf N}$, one has
\begin{itemize}
\item[$(j)$] the equation
$$g(x)+\lambda f(x)=y_k$$
has no solution in $X$\ ;
\item[$(jj)$] the equation
$$g(x)+\lambda f(x)=z_k$$
has two distinct solutions $u_k, v_k$ in $X$ such that
$$\lim_{k\to \infty}u_k=\lim_{k\to \infty}v_k=\hat x\ .$$
\end{itemize}
\end{itemize}
\end{theorem}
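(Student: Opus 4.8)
The plan is to derive Theorem \ref{THEOREM 2} from Theorem \ref{THEOREM 1}, applied with $Y={\bf R}^n$, the only extra ingredient being Brouwer's invariance of domain. First I would apply Theorem \ref{THEOREM 1} to $f$. If alternative $(i)$ holds, then $f$ satisfies the convex hull-like property in $\Omega$, which is exactly $(d_1)$, and we are done. So assume instead that $(ii)$ holds, and let $X\subseteq\Omega$ be the corresponding non-empty open set with $\overline{X}\subseteq\Omega$. I claim that this same $X$ witnesses $(d_2)$.

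Fix a continuous function $g:\Omega\to{\bf R}^n$. By $(ii)$ there is $\tilde\lambda\geq 0$ such that, for every $\lambda>\tilde\lambda$, the set $(g+\lambda f)(X)$ is supported at one of its points. Fix such a $\lambda$, write $h=g+\lambda f$, and use the definition of supported set: there are $\hat x\in X$ and a non-zero linear functional $\varphi:{\bf R}^n\to{\bf R}$ such that, setting $w_0:=h(\hat x)=g(\hat x)+\lambda f(\hat x)$,
$$\varphi(w_0)\leq\varphi(h(x))\qquad\text{for every }x\in X .$$
This inequality alone yields the sequence $\{y_k\}$ of $(j)$: pick $e\in{\bf R}^n$ with $\varphi(e)=1$ and put $y_k=w_0-\frac1k e$; then $\varphi(y_k)=\varphi(w_0)-\frac1k<\varphi(w_0)$, so $y_k\notin h(X)$, i.e. $g(x)+\lambda f(x)=y_k$ has no solution in $X$, while $y_k\to w_0$.

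For $(jj)$ the key observation is that $h$ cannot be injective on any open ball centred at $\hat x$ and contained in $X$. Indeed, fix $\delta_0>0$ with $B(\hat x,\delta_0)\subseteq X$ and suppose $h$ were injective on $B(\hat x,\delta)$ for some $\delta\in(0,\delta_0]$; by invariance of domain, $h(B(\hat x,\delta))$ would be an open neighbourhood of $w_0$ in ${\bf R}^n$, hence (since $\varphi\neq 0$) would contain a point $y$ with $\varphi(y)<\varphi(w_0)$, and writing $y=h(x)$ with $x\in B(\hat x,\delta)\subseteq X$ would contradict the displayed inequality. Now pick a sequence $\{\delta_k\}$ with $0<\delta_k<\delta_0$ and $\delta_k\to 0$. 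For each $k$, since $h$ is not injective on $B(\hat x,\delta_k)$, there are distinct points $u_k,v_k\in B(\hat x,\delta_k)\subseteq X$ with $h(u_k)=h(v_k)=:z_k$; then $u_k\to\hat x$, $v_k\to\hat x$, and by continuity $z_k=h(u_k)\to h(\hat x)=w_0$, so $g(x)+\lambda f(x)=z_k$ has the two distinct solutions $u_k,v_k$ in $X$, converging to $\hat x$. This gives $(jj)$, hence $(d_2)$. The only non-routine step is the appeal to invariance of domain to exclude local injectivity of $h$ at the supporting point $\hat x$; I expect that to be the crux, everything else being bookkeeping around Theorem \ref{THEOREM 1} and the definition of supported set.
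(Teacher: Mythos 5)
Your proposal is correct and follows essentially the same route as the paper: apply Theorem \ref{THEOREM 1}, use the supporting functional $\varphi$ to produce the sequence $\{y_k\}$ outside the image (the paper phrases this as the supported point lying on the boundary of $(g+\lambda f)(X)$, while you construct $y_k=w_0-\frac{1}{k}e$ explicitly, which amounts to the same thing), and then invoke invariance of domain on shrinking balls around $\hat x$ to get the non-injectivity pairs $u_k,v_k$ for $(jj)$. No gaps.
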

\smallskip
\begin{proof}Apply Theorem \ref{THEOREM 1} with $E=Y={\bf R}^n$. Assume that $(d_1)$ does not hold.  Let $X\subseteq \Omega$ be an open
set as in $(ii)$ of Theorem \ref{THEOREM 1}. Fix any continuous function $g:\Omega\to {\bf R}^n$.
Then, there is
some $\tilde\lambda\geq 0$ such that, for each $\lambda>\tilde\lambda$, there exists $\hat x\in X$
such that the set $(g+\lambda f)(X)$ is supported at $g(\hat x)+\lambda f(\hat x)$. As we observed at the
beginning, this implies that
$g(\hat x)+\lambda f(\hat x)$ lies in the boundary of $(g+\lambda f)(X)$.
Therefore, we can find a sequence $\{y_k\}$ in ${\bf R}^n\setminus (g+\lambda f)(X)$ converging
to $g(\hat x)+\lambda f(\hat x)$. So, such a sequence satisfies $(j)$. For each $k\in {\bf N}$,
denote by $B_k$ the open ball of radius ${{1}\over {k}}$ centered at $\hat x$. Let $k$ be such that
$B_k\subseteq X$. The set $(g+\lambda f)(B_k)$ is not open since  its boundary contains the point
$g(\hat x)+\lambda f(\hat x)$. Consequently, by the invariance of domain theorem (\cite{Z}, p. 705),
the function $g+\lambda f$ is not injective in $B_k$. So, there are $u_k, v_k\in B_k$, with $u_k\neq v_k$
such that
$$g(u_k)+\lambda f(u_k)=g(v_k)+\lambda f(v_k)\ .$$
Hence, if we take
$$z_k=g(u_k)+\lambda f(u_k)\ ,$$
the sequences $\{u_k\}, \{v_k\}, \{z_k\}$ satisfy $(jj)$ and the proof is complete.\end{proof}

\medskip
\begin{remark}\label{REMARK 1}
Notice that, in general, Theorem \ref{THEOREM 2} is no longer true when $f:\Omega\to {\bf R}^m$ with $m>n$. In this connection,
consider the case $n=1$, $m=2$, $\Omega=]0,\pi[$ and $f(\theta)=(\cos\theta,\sin\theta)$ for $\theta\in [0,\pi]$. So,
for each $\lambda>0$, on the one hand, the function $\lambda f$ is injective, while, on the other hand, $\lambda f(]0,\pi[)$
is not contained in conv$(\{f(0),f(\pi)\})$.
\end{remark}

\medskip
If $S\subseteq {\bf R}^n$ is a non-empty open set, $x\in S$ and $h:S\to {\bf R}^n$ is a $C^1$ function, we denote
by det$(J_h(x))$ the Jacobian determinant of $h$ at $x$.

\smallskip
Another important consequence of Theorem \ref{THEOREM 1} is as follows:

\medskip
\begin{theorem}\label{THEOREM 3} Let $\Omega$ be a non-empty bounded open subset of ${\bf R}^n$ and let
 $f:\Omega\to {\bf R}^n$ be a $C^1$ function. 
 
Then,  at least one of the following assertions holds:
\begin{itemize}
\item[$(e_1)$] $f$ satisfies the convex hull-like property in $\Omega$\ .
\item[$(e_2)$] There exists a non-empty open set $X\subseteq \Omega$, with $\overline {X}\subseteq \Omega$, satisfying the following property:
for every continuous  function $g:\Omega\to {\bf R}^n$ which is $C^1$ in $X$, there exists $\tilde\lambda\geq 0$ such that, for each
$\lambda>\tilde\lambda$,
one has $$\hbox {\rm det}(J_{g+\lambda f}(\hat x))=0$$ for some $\hat x\in X$.
\end{itemize}
\end{theorem}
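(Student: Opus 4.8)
The plan is to derive Theorem \ref{THEOREM 3} from Theorem \ref{THEOREM 1} along exactly the same lines as Theorem \ref{THEOREM 2}, with the invariance of domain theorem replaced by the inverse function theorem. So I would first apply Theorem \ref{THEOREM 1} with $E=Y={\bf R}^n$, and assume that $(e_1)$ does not hold; then assertion $(i)$ of Theorem \ref{THEOREM 1} fails, and we obtain a non-empty open set $X\subseteq\Omega$, with $\overline{X}\subseteq\Omega$, satisfying property $(ii)$.

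Next, fix an arbitrary continuous function $g:\Omega\to{\bf R}^n$ which is $C^1$ in $X$. Since $f$ is $C^1$ in $\Omega$ and $X\subseteq\Omega$, the function $g+\lambda f$ is $C^1$ in $X$ for every $\lambda\in{\bf R}$. Applying property $(ii)$ to $g$ (which is admissible, being in particular continuous on $\Omega$), we get $\tilde\lambda\geq 0$ such that, for every $\lambda>\tilde\lambda$, the set $(g+\lambda f)(X)$ is supported at one of its points, say at $g(\hat x)+\lambda f(\hat x)$ with $\hat x\in X$. As recalled in the Introduction, a point at which a set is supported necessarily belongs to the boundary of that set; hence $g(\hat x)+\lambda f(\hat x)$ lies in the boundary of $(g+\lambda f)(X)$.

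The last step is to show that this forces $\det(J_{g+\lambda f}(\hat x))=0$. Arguing by contradiction, assume that $\det(J_{g+\lambda f}(\hat x))\neq 0$. Since $g+\lambda f$ is $C^1$ on the open set $X$, which is a neighbourhood of $\hat x$, the inverse function theorem shows that $g+\lambda f$ is a local diffeomorphism at $\hat x$; in particular, it maps some open neighbourhood $V\subseteq X$ of $\hat x$ onto an open subset of ${\bf R}^n$ containing $g(\hat x)+\lambda f(\hat x)$. Consequently $g(\hat x)+\lambda f(\hat x)$ would be an interior point of $(g+\lambda f)(X)$, contradicting the fact that it belongs to the boundary of that set. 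Therefore $\det(J_{g+\lambda f}(\hat x))=0$, and $(e_2)$ holds.

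Since Theorem \ref{THEOREM 1} is at our disposal, the argument is short and no serious obstacle is anticipated: beyond Theorem \ref{THEOREM 1} itself, the only ingredients are the elementary remark that supported points are boundary points and the open-mapping property of a $C^1$ map with non-singular Jacobian. The one point worth a moment's care is that the very set $X$ produced by Theorem \ref{THEOREM 1} — which works for all continuous $g$ — also serves for the smaller class of functions $g$ that are merely $C^1$ in $X$, which it plainly does, the value $\tilde\lambda$ being carried over unchanged.
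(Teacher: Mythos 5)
Your proposal is correct and follows essentially the same route as the paper: invoke Theorem \ref{THEOREM 1}, note that the supported point $g(\hat x)+\lambda f(\hat x)$ is a boundary point of $(g+\lambda f)(X)$, and conclude via the inverse function theorem that the Jacobian must vanish at $\hat x$, since otherwise $g+\lambda f$ would be a local homeomorphism there and the image would contain a neighbourhood of that point. The only difference is cosmetic: you phrase the last step as an explicit contradiction, while the paper states it directly.
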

\smallskip
\begin{proof}  Assume that $(e_1)$ does not hold. Let $X$ be an open set as in $(ii)$ of Theorem \ref{THEOREM 1}. Let $g:\Omega\to {\bf R}^n$ be a continuous function which
is $C^1$ in $X$. Then, there is
some $\tilde\lambda\geq 0$ such that, for each $\lambda>\tilde\lambda$, there exists $\hat x\in X$
such that the set $(g+\lambda f)(X)$ is supported at $g(\hat x)+\lambda f(\hat x)$.  By remarks already made, we infer that
the function $g+\lambda f$ is not a local homeomorphsim at $\hat x$, and so $\hbox {\rm det}(J_{g+\lambda f}(\hat x))=0$ in view of
the classical inverse function theorem.\end{proof}
\medskip
In turn, here is a consequence of Theorem \ref{THEOREM 3} when $n=2$.

\medskip
\begin{theorem}\label{THEOREM 4}
Let $\Omega$ be a non-empty bounded open set of ${\bf R}^2$, let
$h:\Omega\to {\bf R}$ be a continuous function and let $\alpha, \beta:\Omega\to {\bf R}$ be two $C^1$ functions such that
$|\alpha_x\beta_y-\alpha_y\beta_x|+|h|>0$ and $(\alpha_x\beta_y-\alpha_y\beta_x)h\geq 0$ in $\Omega$.

Then, any $C ^1$ solution $(u,v)$ in $\Omega$ of the system
\begin{equation}\label{7}
\left\{
  \begin{array}{ll}
    u_xv_y-u_yv_x=h,  \\
\beta_y u_x-\beta_x u_y-\alpha_yv_x+\alpha_xv_y =0
    \end{array}
\right.
\end{equation}
 satisfies the convex hull-like property in $\Omega$.
 \end{theorem}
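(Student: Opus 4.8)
The plan is to deduce Theorem \ref{THEOREM 4} from Theorem \ref{THEOREM 3} applied to the $C^1$ map $f:=(u,v):\Omega\to{\bf R}^2$. Arguing by contradiction, I would assume that $f$ does \emph{not} satisfy the convex hull-like property in $\Omega$, so that alternative $(e_2)$ of Theorem \ref{THEOREM 3} holds, furnishing a non-empty open set $X\subseteq\Omega$, with $\overline X\subseteq\Omega$, having the stated property. The idea is then to test that property against the particular choice $g:=(\alpha,\beta)$, which is $C^1$ on all of $\Omega$ --- in particular continuous on $\Omega$ and $C^1$ in $X$ --- and to show that the conclusion of $(e_2)$ cannot hold for this $g$.

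The heart of the matter is an explicit computation of the Jacobian determinant of $g+\lambda f=(\alpha+\lambda u,\beta+\lambda v)$. Expanding
$$\det(J_{g+\lambda f})=(\alpha_x+\lambda u_x)(\beta_y+\lambda v_y)-(\alpha_y+\lambda u_y)(\beta_x+\lambda v_x)$$
and collecting the powers of $\lambda$ gives
$$\det(J_{g+\lambda f})=(\alpha_x\beta_y-\alpha_y\beta_x)+\lambda\,\ell+\lambda^2\,(u_xv_y-u_yv_x)\ ,$$
where $\ell:=\beta_y u_x-\beta_x u_y-\alpha_y v_x+\alpha_x v_y$ is precisely the left-hand side of the second equation in $(\ref{7})$. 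Hence, along any $C^1$ solution $(u,v)$ of $(\ref{7})$, the coefficient of $\lambda$ vanishes identically in $\Omega$, while the coefficient of $\lambda^2$ equals $h$ by the first equation. Therefore, at every point of $X$,
$$\det(J_{g+\lambda f})=(\alpha_x\beta_y-\alpha_y\beta_x)+\lambda^2 h\ .$$

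It remains to invoke the sign hypotheses. Since $(\alpha_x\beta_y-\alpha_y\beta_x)h\geq 0$ and $|\alpha_x\beta_y-\alpha_y\beta_x|+|h|>0$ throughout $\Omega$, at each point of $X$ the numbers $\alpha_x\beta_y-\alpha_y\beta_x$ and $h$ have the same sign and are not both zero; consequently, for every $\lambda\neq 0$, the quantity $(\alpha_x\beta_y-\alpha_y\beta_x)+\lambda^2 h$ is different from zero at every point of $X$. In particular, choosing $\lambda=\tilde\lambda+1>\tilde\lambda$, we get $\det(J_{g+\lambda f}(\hat x))\neq 0$ for all $\hat x\in X$, which contradicts the property asserted in $(e_2)$ for the function $g$. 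Thus $(e_2)$ is impossible, so $(e_1)$ holds and $f=(u,v)$ satisfies the convex hull-like property in $\Omega$, as claimed. I expect no real obstacle here beyond spotting the algebraic identity that makes the linear-in-$\lambda$ term of the Jacobian determinant coincide with the second equation of the system; the rest is bookkeeping and an elementary sign discussion.
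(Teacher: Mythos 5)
Your proposal is correct and follows essentially the same route as the paper: apply Theorem \ref{THEOREM 3} with $f=(u,v)$ and $g=(\alpha,\beta)$, expand $\det(J_{g+\lambda f})$ as a quadratic in $\lambda$ whose linear coefficient is the left-hand side of the second equation of the system, and then rule out a zero of $(\alpha_x\beta_y-\alpha_y\beta_x)+\lambda^2 h$ via the sign hypotheses. The only (immaterial) difference is that you explicitly pick $\lambda=\tilde\lambda+1$, while the paper simply notes that some $\lambda>0$ with a vanishing Jacobian must exist.
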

\smallskip
\begin{proof}Arguing by contradiction, assume that $(u,v)$ does not satisfy the convex hull-like property in $\Omega$. Then, by Theorem \ref{THEOREM 3}, applied taking
$f=(u,v)$ and $g=(\alpha,\beta)$, there exist $\lambda>0$ and $(\hat x,\hat y)\in \Omega$ such that
$$\hbox {\rm det}(J_{g+\lambda f}(\hat x,\hat y))=0\ .$$
On the other hand, for each $(x,y)\in \Omega$, we have
\begin{eqnarray*}\hbox {\rm det}(J_{g+\lambda f}(x, y))&=&(u_xv_y-u_yv_x)(x,y)\lambda^2+(\beta_y u_x-\beta_x u_y-\alpha_yv_x+\alpha_xv_y)(x,y)\lambda\\&+&
(\alpha_x\beta_y-\alpha_y\beta_x)(x,y)
\end{eqnarray*}
and hence
$$h(\hat x,\hat y)\lambda^2+(\alpha_x\beta_y-\alpha_y\beta_x)(\hat x,\hat y)=0$$
which is impossible in view of our assumptions.\end{proof}

\medskip
We conclude by highlighting two applications of Theorem \ref{THEOREM 4}.

\medskip
\begin{theorem}\label{THEOREM 5}Let $\Omega$ be a non-empty bounded open subset of ${\bf R}^2$, let $h:\Omega\to {\bf R}$ be a continuous non-negative function and
let $w\in C^2(\Omega)$ be a function satisfying in $\Omega$ the  Monge-Amp\`ere equation
$$w_{xx}w_{yy}-w_{xy}^2=h\ .$$
Then, the gradient of $w$ satisfies the convex hull-like property in $\Omega$.
\end{theorem}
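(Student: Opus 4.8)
The plan is to deduce Theorem~\ref{THEOREM 5} directly from Theorem~\ref{THEOREM 4}, by choosing suitable auxiliary functions $\alpha,\beta$ so that $\nabla w$ becomes a solution of the system $(\ref{7})$.

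I would set $u=w_x$, $v=w_y$, so that $(u,v)=\nabla w$ is $C^1$ in $\Omega$ because $w\in C^2(\Omega)$, and I would take $\alpha(x,y)=-y$, $\beta(x,y)=x$. These are $C^1$, with $\alpha_x=0$, $\alpha_y=-1$, $\beta_x=1$, $\beta_y=0$, so $\alpha_x\beta_y-\alpha_y\beta_x=1$; hence $|\alpha_x\beta_y-\alpha_y\beta_x|+|h|=1+h>0$ and $(\alpha_x\beta_y-\alpha_y\beta_x)h=h\ge 0$ throughout $\Omega$, which are exactly the standing hypotheses of Theorem~\ref{THEOREM 4} for the given continuous, non-negative $h$. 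It then remains to verify that $(u,v)$ solves $(\ref{7})$. Since $w\in C^2(\Omega)$ gives $w_{xy}=w_{yx}$, the first equation reads $u_xv_y-u_yv_x=w_{xx}w_{yy}-w_{xy}^2=h$, which is precisely the Monge-Amp\`ere equation; and since $(u_x,u_y,v_x,v_y)=(w_{xx},w_{xy},w_{xy},w_{yy})$, the second reads $\beta_yu_x-\beta_xu_y-\alpha_yv_x+\alpha_xv_y=-w_{xy}+w_{xy}=0$. Thus $(u,v)=\nabla w$ is a $C^1$ solution of $(\ref{7})$, and Theorem~\ref{THEOREM 4} yields that $\nabla w$ satisfies the convex hull-like property in $\Omega$, as claimed.

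The only non-routine point is the choice of $(\alpha,\beta)$, and that is where I expect the genuine difficulty to lie. The vanishing of the second equation of $(\ref{7})$ for $(\alpha,\beta)=(-y,x)$ is structural: for $2\times 2$ matrices the coefficient of $\lambda$ in $\det\bigl(J_{(\alpha,\beta)}+\lambda J_{(u,v)}\bigr)$ equals $\mathrm{tr}\bigl(J_{(\alpha,\beta)}\,\mathrm{adj}(J_{(u,v)})\bigr)$, and since $J_{(u,v)}=\mathrm{Hess}(w)$ is symmetric its adjugate is symmetric, while the Jacobian of $(-y,x)$ is the skew-symmetric right-angle rotation, so this trace is $0$ for \emph{every} $w$. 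One might instead be tempted to try $\alpha=w_y$, $\beta=-w_x$, which also annihilates this coefficient of $\lambda$; but then $\alpha_x\beta_y-\alpha_y\beta_x=h$, so $|\alpha_x\beta_y-\alpha_y\beta_x|+|h|$ vanishes wherever $h=0$ and Theorem~\ref{THEOREM 4} cannot be invoked. The merit of the rotation $(-y,x)$ is precisely that it keeps $\alpha_x\beta_y-\alpha_y\beta_x$ identically equal to the constant $1$ while still killing the cross term, so that no positive lower bound on $h$ is needed.
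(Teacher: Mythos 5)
Your proposal is correct and coincides with the paper's own proof: the paper likewise deduces Theorem \ref{THEOREM 5} from Theorem \ref{THEOREM 4} by observing that $(w_x,w_y)$ solves the system $(\ref{7})$ with $\alpha(x,y)=-y$, $\beta(x,y)=x$, and that these $\alpha,\beta$ satisfy its hypotheses. Your explicit verification and the remark on why the rotation works are sound additions, but the argument is the same.
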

\smallskip
\begin{proof}It is enough to observe that $(w_x,w_y)$ is a $C^1$ solution in $\Omega$ of the system $(\ref{7})$ with
$\alpha(x,y)=-y$ and $\beta(x,y)=x$ and that such $\alpha, \beta$ satisfy the assumptions of Theorem \ref{THEOREM 4}.\end{proof}

\medskip
\begin{theorem}\label{THEOREM 6} Let $\Omega$ be a non-empty bounded open subset of ${\bf R}^2$ and let $\beta:\Omega\to
{\bf R}$ be a $C^1$ function. Assume that there exists another $C^1$ function $\alpha:\Omega\to {\bf R}$ so that
the function $\alpha_x\beta_y-\alpha_y\beta_x$ vanishes at no point of $\Omega$. 

 Then, for any function $u\in C^1(\Omega)\cap C^0(\overline {\Omega})$ satisfying in $\Omega$ the equation
$$\beta_yu_x-\beta_xu_y=0\ ,$$
 one has
$$\sup_{\Omega}u=\sup_{\partial\Omega}u$$
and
$$\inf_{\Omega}u=\inf_{\partial\Omega}u\ .$$
\end{theorem}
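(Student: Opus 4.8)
The plan is to reduce Theorem \ref{THEOREM 6} to Theorem \ref{THEOREM 4} (or, rather, to Theorem \ref{THEOREM 5}'s underlying mechanism) by setting up an auxiliary first-order system for which $u$ is one component of a solution. Given $\beta$ and the hypothesized companion $\alpha$ with $\alpha_x\beta_y-\alpha_y\beta_x\neq 0$ throughout $\Omega$, I would look at the pair $(u,v)$ where $v$ is to be constructed so that the second equation of $(\ref{7})$, namely $\beta_yu_x-\beta_xu_y-\alpha_yv_x+\alpha_xv_y=0$, is satisfied. Since $u$ already solves $\beta_yu_x-\beta_xu_y=0$, this reduces to asking for $v$ with $\alpha_xv_y-\alpha_yv_x=0$; the obvious choice is $v=\alpha$ itself (or any $C^1$ function of $\alpha$), so that the second equation holds identically.

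Next I would address the first equation $u_xv_y-u_yv_x=h$. With $v=\alpha$ this reads $u_x\alpha_y-u_y\alpha_x=h$, which we are \emph{not} free to impose — $h$ is whatever it turns out to be. But that is fine: we simply \emph{define} $h:=u_x\alpha_y-u_y\alpha_x$, a continuous function on $\Omega$. To apply Theorem \ref{THEOREM 4} we must check its two hypotheses on $\alpha,\beta,h$: first $|\alpha_x\beta_y-\alpha_y\beta_x|+|h|>0$ in $\Omega$, which is immediate since $\alpha_x\beta_y-\alpha_y\beta_x$ never vanishes by assumption; and second $(\alpha_x\beta_y-\alpha_y\beta_x)h\geq 0$ in $\Omega$ — this is the point that needs care. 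If it fails at some points, one cannot invoke Theorem \ref{THEOREM 4} directly. The remedy I anticipate is to not fix $v=\alpha$ but to allow a sign adjustment, or more robustly to replace the scalar conclusion by a direct rerun of the proof of Theorem \ref{THEOREM 3}: in the Jacobian-determinant computation for $g+\lambda f$ with $f=(u,v)$, $g=(\alpha,\beta)$, the coefficient of $\lambda$ is exactly the left side of the second equation of $(\ref{7})$, which vanishes, so $\mathrm{det}(J_{g+\lambda f})=(u_x v_y-u_yv_x)\lambda^2+(\alpha_x\beta_y-\alpha_y\beta_x)$; choosing the orientation of $v=\alpha$ so that $u_xv_y-u_yv_x$ and $\alpha_x\beta_y-\alpha_y\beta_x$ have opposite signs would make this a sum of a nonpositive and a nonzero-signed term — but signs can still clash pointwise. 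The clean fix is: since $u_x\alpha_y-u_y\alpha_x$ and $\alpha_x\beta_y-\alpha_y\beta_x$ are continuous, work on each connected component of $\Omega$ separately and on each component choose $v=\pm\alpha$ so that $(u_x v_y - u_y v_x)(\alpha_x\beta_y-\alpha_y\beta_x)\leq 0$ there; wait — that is not automatic either. Let me instead take the genuinely safe route below.

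The genuinely safe route, and the one I would actually write: apply Theorem \ref{THEOREM 4} with $h$ identically $0$. Indeed, take $f=(u,v)$ with $v$ a \emph{constant} function, say $v\equiv 0$. Then $u_xv_y-u_yv_x=0=:h$, so the first equation of $(\ref{7})$ holds with $h\equiv 0$; the second equation becomes $\beta_yu_x-\beta_xu_y-\alpha_y\cdot 0+\alpha_x\cdot 0=\beta_yu_x-\beta_xu_y=0$, which is exactly our hypothesis on $u$. The hypotheses of Theorem \ref{THEOREM 4} on $\alpha,\beta,h$ are then trivially met: $|\alpha_x\beta_y-\alpha_y\beta_x|+|h|=|\alpha_x\beta_y-\alpha_y\beta_x|>0$ and $(\alpha_x\beta_y-\alpha_y\beta_x)h=0\geq 0$. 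Hence $(u,0)$ satisfies the convex hull-like property in $\Omega$. Now apply Proposition \ref{PROPOSITION 3}: since $u\in C^0(\overline\Omega)$, the map $(u,0):\overline\Omega\to{\bf R}^2$ is continuous and restricts to the solution on $\Omega$, so it satisfies the convex hull property, i.e. $(u,0)(\Omega)\subseteq\mathrm{conv}((u,0)(\partial\Omega))$. Projecting onto the first coordinate, $u(\Omega)\subseteq\mathrm{conv}(u(\partial\Omega))=[\inf_{\partial\Omega}u,\sup_{\partial\Omega}u]$, which gives both $\sup_\Omega u=\sup_{\partial\Omega}u$ and $\inf_\Omega u=\inf_{\partial\Omega}u$. (Alternatively, and equivalently, one feeds the two continuous quasi-convex functions $t\mapsto t$ and $t\mapsto -t$ on ${\bf R}$, composed with the first-coordinate projection, into Definition \ref{DEFINITION 1} and Proposition \ref{PROPOSITION 3}.)

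The only obstacle is the one I flagged above: making sure the auxiliary second component of $f$ is chosen so that the hypotheses of Theorem \ref{THEOREM 4} genuinely hold. Choosing it constant sidesteps the sign condition entirely, so in the end there is no real obstacle — the proof is a short specialization. I would present exactly the $v\equiv 0$ argument, noting only that we must separately verify $u$ extends continuously to $\overline\Omega$ in order to pass from the convex hull-\emph{like} property to the convex hull property via Proposition \ref{PROPOSITION 3}, which is given in the hypothesis $u\in C^1(\Omega)\cap C^0(\overline\Omega)$.
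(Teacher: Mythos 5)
Your final argument --- take $f=(u,0)$, note that it solves the system (\ref{7}) with $h\equiv 0$ and that the hypotheses of Theorem \ref{THEOREM 4} then hold trivially, and pass from the convex hull-like property to the boundary sup/inf statement via Proposition \ref{PROPOSITION 3} using $u\in C^0(\overline\Omega)$ --- is exactly the paper's proof. The preliminary detour through $v=\alpha$ is unnecessary but harmless, since you correctly discard it in favour of the $v\equiv 0$ specialization.
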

\smallskip
\begin{proof} Observe that the function $(u,0)$ satisfies the system $(\ref{7})$ with $h=0$ and that the assumptions
of Theorem \ref{THEOREM 4} are fulfilled. So, $(u,0)$ satisfies the convex hull-like property in $\Omega$. Since $u\in C^0(\overline {\Omega})$,
the conclusion follows from Proposition \ref{PROPOSITION 3}.\end{proof}
\medskip
{\bf Acknowledgement.} The author has been supported by the Gruppo Nazionale per l'Analisi Matematica, la Probabilit\`a e le loro Applicazioni (GNAMPA) of the Istituto Nazionale di Alta Matematica (INdAM).

\bibliographystyle{amsplain}

\end{document}